\documentclass[12pt]{amsart}
\usepackage{amsmath,amssymb,amsthm,bm,longtable,color}

 \usepackage{a4wide}

\pagestyle{plain}

\usepackage{latexsym,amssymb,color,pgf,tikz}
\usetikzlibrary{arrows,shapes,positioning}
\numberwithin{equation}{section}
\newtheorem{theorem}{Theorem}[section]

\newtheorem{lemma}[theorem]{Lemma}

\newtheorem{remark}[theorem]{Remark}
\newcommand{\SL}{\mathop{\mathrm{SL}}}

\newcommand{\PSL}{\mathop{\mathrm{PSL}}}

\newcommand{\alt}{\mathop{\mathrm{Alt}}}
\newcommand{\aut}{\mathop{\mathrm{Aut}}}

\newcommand{\sym}{\mathop{\mathrm{Sym}}}

\newcounter{claim}[theorem]

\title[Vertex-transitive graphs and local action $S_n$]{Vertex-transitive graphs with local action the symmetric group on ordered pairs}

\author{Luke Morgan}

\thanks{The author would like to thank the Isaac Newton Institute for Mathematical Sciences, Cambridge, for support and hospitality during the programme ``Groups, representations and applications: new perspectives'' where work on this paper was undertaken. This work was supported by EPSRC grant no EP/R014604/1. This work is supported in part by the Slovenian Research Agency (research program P1-0285 and research projects J1-1691, N1-0160, J1-2451).}

\address{Luke Morgan \\ University of Primorska, UP FAMNIT, Glagolja\v{s}ka 8, 6000 Koper, Slovenia, and University of Primorska, UP IAM, Muzejski trg 2,  6000 Koper, Slovenia.}
\email{luke.morgan@famnit.upr.si}

\subjclass[2010]{Primary 20B25; Secondary 05E18}
\keywords{vertex-transitive graph, semiprimitive group, graph-restrictive}

\begin{document}

\begin{abstract}
We consider a finite, connected and simple graph $\Gamma$ that admits a vertex-transitive group of automorphisms $G$.  Under the assumption that, for all $x \in V(\Gamma)$, the local action $G_x^{\Gamma(x)}$ is  the action of $\sym(n)$  on ordered pairs, we show that the group $G_x^{[3]}$, the pointwise stabiliser of a ball of radius three around $x$, is trivial.
\end{abstract}

\maketitle

\section{introduction}

In this paper, all graphs are finite, connected and simple and all groups are assumed to be finite. Let $L$ be a permutation group and let $\Gamma$ be a graph with $ G \leqslant \aut(\Gamma)$. For a vertex $x$ of $\Gamma$, the \emph{local action}  (at $x$) is $G_x^{\Gamma(x)}$ -- the group induced by $G_x$, the stabiliser of $x$ in $G$, on $\Gamma(x)$, the neighbours of $x$ in $\Gamma$. We   say that the pair $(\Gamma,G)$ is locally $L$ if $G$ acts transitively on the vertex set of $\Gamma$ and for each vertex $x$ of $\Gamma$  the local action $G_x^{\Gamma(x)}$  is permutationally isomorphic to $L$. The conjectures of Weiss \cite{WeissC}, Praeger \cite{PraegerC} and Poto\v{c}nik-Spiga-Verret \cite{psv} connect properties of the local action with global properties of the graph -- in terms of the number of automorphisms. For the purposes of this article, we phrase these conjectures using the following terminology (see \cite{verret}). A permutation group $L$ is called \emph{graph-restrictive} if there is a constant $c$ (depending on $L$) such that for each locally $L$ pair $(\Gamma,G)$ the equation $G_x^{[c]}=1$ holds. (Here the group $G_x^{[i]}$ is the point-wise stabiliser of a ball of radius $i$ around a vertex $x$.) The aforementioned conjectures say that primitive, quasiprimitive and semiprimitive groups, respectively, are graph-restrictive.  Moreover, Poto\v{c}nik, Spiga and Verret proved that a graph-restrictive group must be semiprimitive \cite[Theorem]{psv}, and therefore it makes sense to study this particular class of groups for these conjectures. When the conclusion of the conjectures hold, the group $G_x$ must act faithfully on the set of vertices at distance $c$ from $x$, and therefore has order bounded by $(d(d-1)^{c})!$, where $d$ is the valency of $\Gamma$ (also the degree of $L$).

%Primitive permutation groups have long been studied in permutation group theory, going back to the 19th Century for example. On the other hand, quasiprimitive groups and semiprimitive groups are much younger classes of groups. All  primitive groups are quasiprimitive, and all quasiprimitive groups are semiprimitive. 

In \cite[Proposition 14]{psv}, the open cases of the Poto\v{c}nik-Spiga-Verret Conjecture up to degree twelve are listed; these compromise two groups of degree nine (one a primitive group), a primitive group of degree ten and a semiprimitive group (that is not quasiprimitive) of degree twelve.  The groups of degree nine are known to be graph-restrictive by results of Spiga \cite{spigaffine} and Giudici, Morgan \cite{lukemichael3n2}. The primitive group of degree ten is $\sym(5)$ in its action on the set of ten unordered pairs  of elements of $\{1,2,3,4,5\}$, and therefore should be studied as part of that infinite family of groups (and within the context of the Weiss Conjecture).
%  Recent results of Spiga \cite{pablo} and of  Giudici, Morgan \cite{GM} have shown the conjecture holds for primitive groups of affine type and for semiprimitive groups with  at least two \emph{plinths}.   
%  the smallest open case of the PSV Conjecture is a permutation group of degree degree ten, which is $\sym(5)$ acting on the set of unordered pairs of elements of $\{1,2,3,4,5\}$. A result of Rassy shows that this group is graph-restrictive.
   The next  case is the permutation group $L$ induced by $\sym(4)$ on the set of twelve ordered pairs of elements of $\{1,2,3,4\}$. This permutation group is not quasiprimitive, and so  is an interesting  test case of the validity of the Poto\v{c}nik-Spiga-Verret Conjecture.

Semiprimitive permutation groups were first studied by Bereczky and Mar\'{o}ti \cite{BM}. A transitive group is semiprimitive if each normal subgroup is transitive or semiregular. Recent investigations \cite{GiudiciMorgan} have revealed that the class of semiprimitive groups do admit a structure theory, albeit  not one as rigid as that of the primitive or quasiprimitive groups, which are described by their respective O'Nan-Scott type theorems \cite{LPSON,onQP}. The semiprimitive groups are first divided into three classes according to the  \emph{plinth} type. A plinth is a minimally transitive normal subgroup. A semiprimitive group having more than one plinth is known to be graph-restrictive by \cite{GiudiciMorgan}. The remaining two types have unique plinths and consist of those groups with regular plinth and those groups with non-regular plinth.  The class of semiprimitive groups a unique non-regular plinth  includes all almost simple primitive groups. The group $L$ defined above falls into the class of semiprimitive groups with a regular normal plinth (here $[L,L]\cong \alt(4)$ is regular).  Previous work on groups with regular normal plinth have focussed on the case that a plinth is nilpotent \cite{giudicimorganLRN}. 

In this article we show that the group $L$ is indeed graph-restrictive. The group $L$ is a member of a general family of semiprimitive permutation groups; those induced by $\sym(n)$ on the set of ordered pairs of distinct elements of $\{1,2,\ldots,n\}$. These groups, which have degree $n(n-1)$, are all found to be graph-restrictive by the following theorem.

\begin{theorem}
\label{thm: intro}
Let $n \geqslant 3$ be an integer and let $L \cong \sym(n)$ be the permutation group  induced by the action of $\sym(n)$ on the set of ordered pairs of distinct elements of $\{1,\ldots,n\}$. Let $(\Gamma,G)$ be a locally $L$ pair. If $n=3$ then $G_x^{[1]}=1$ holds, if $n=5$ or $n\geqslant 7$ then $G_x^{[2]}=1$ holds, and if $n=4$ or $6$ then $G_x^{[3]}=1$ holds.
%If $n=1$, set $c=1$, if $n \in \{4,6\}$ set $c=3$, and otherwise set $c=2$. Then in any locally $L$ pair $(\Gamma,G)$, the equation $G_x^{[c]}=1$ holds for all $x\in V(\Gamma)$.
\end{theorem}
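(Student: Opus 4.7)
\emph{Proof proposal.} The case $n=3$ is essentially immediate: since $L \cong \sym(3)$ is regular on its six ordered pairs, for any edge $\{x,y\}$ of $\Gamma$ the point $y \in \Gamma(x)$ has trivial stabiliser in $G_x^{\Gamma(x)}$, so $G_{xy} = G_x^{[1]}$. By symmetry $G_{xy} = G_y^{[1]}$, and connectivity of $\Gamma$ forces $G_x^{[1]} = 1$.

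For $n \geq 4$ the plan is to run an amalgam-style analysis along a fixed arc $(x,y)$, studying the chain $G_{xy} \supseteq G_x^{[1]} \supseteq G_{xy}^{[1]} \supseteq G_x^{[2]} \supseteq \cdots$. Two combinatorial features of $L$ are central. First, the stabiliser $L_y \cong \sym(n-2)$ has orbits on $\Gamma(x)\setminus\{y\}$ of lengths $1, n-2, n-2, n-2, n-2, (n-2)(n-3)$; the unique length-one orbit consists of the reversed ordered pair and provides a canonical $L$-invariant pairing on $\Gamma(x)$. Second, quotienting by this pairing yields the primitive action of $\sym(n)$ on $\binom{n}{2}$ unordered $2$-subsets, whose relevant graph-theoretic features are well understood; one hopes to lift known constraints from this primitive quotient to the full action.

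For $n\geq 5$ the group $L$ is almost simple with simple socle $A_n$, hence quasiprimitive, so a Thompson-Wielandt type theorem (in the quasiprimitive form, or a semiprimitive refinement of the kind used in the author's joint work with Giudici) forces $G_{xy}^{[1]}$ to be a $p$-group for some prime $p$. With $A_n$ simple and non-regular, a direct generation argument should then apply: one lifts elements of $A_n \leq L$ to $G_x$ and shows, using the large $L_y$-suborbit of length $(n-2)(n-3)$ together with the $3$-transitivity of $A_n$ on $\{1,\ldots,n\}$, that the commutators forced into $G_x^{[1]}$ must act trivially on $\Gamma(y)$ for every neighbour $y$ of $x$. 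This should cleanly deliver $G_x^{[2]}=1$ for $n\geq 7$; the case $n=5$ should follow by the same method after an extra combinatorial check to compensate for the smaller plinth.

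The main obstacle is the two exceptional values $n=4$ and $n=6$. For $n=4$ the plinth $A_4$ is regular on the twelve ordered pairs, so $L$ is only semiprimitive, not quasiprimitive, and the simplicity-of-plinth rigidity used above is unavailable; the natural replacement is to invoke and adapt the regular-plinth machinery of the author's paper with Giudici, even though $A_4$ is soluble but not nilpotent. For $n=6$ the exceptional outer automorphism of $A_6$ and $\sym(6)$ creates an additional fusion among point stabilisers that obstructs the level-$2$ rigidity argument. In both exceptional cases the conclusion $G_x^{[3]}=1$ is expected to be sharp, with examples where $G_x^{[2]}\neq 1$, so the task is to analyse the induced action of a putative non-trivial element of $G_x^{[2]}$ on $\Gamma_3(x)$ using the complete suborbit structure at distances $1$, $2$ and $3$, and to eliminate every configuration by a combination of order arithmetic and the constraints imposed by the paired-block structure on each $\Gamma(z)$. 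This final exceptional analysis is where I expect the bulk of the technical work to lie.
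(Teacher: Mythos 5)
There is a genuine gap: what you have written is a plan rather than a proof, and several of its key steps are either missing or point in directions that would not work. The $n=3$ case is fine and matches the paper. But for $n\geqslant 4$ the actual mechanism is quite different from what you sketch. After the Thompson--Wielandt theorem (in Spiga's semiprimitive form) gives a prime $p$ with $S_{xy}=O_p(G_{xy})$ nontrivial, the Hauptlemma forces $S_{xy}\nleqslant G_x^{[1]}$, so $S_{xy}^{\Gamma(x)}$ is a nontrivial normal $p$-subgroup of $G_{xy}^{\Gamma(x)}\cong\sym(n-2)$; since $O_p(\sym(n-2))=1$ for all $p$ once $n\geqslant 7$, this alone kills $G_{xy}^{[1]}$ and hence $G_x^{[2]}$ for $n\geqslant 7$, and it pins the surviving cases down to $(n,p)\in\{(4,2),(5,3),(6,2)\}$. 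Your proposed ``generation argument using the large suborbit and $3$-transitivity of $A_n$'' is not needed and is not a proof as stated; and your diagnosis of why $n=6$ is exceptional (the outer automorphism of $A_6$) is wrong --- the reason is simply that $\sym(4)$ has a nontrivial normal $2$-subgroup. Likewise $(5,3)$ is not disposed of by a ``combinatorial check'': the paper eliminates it by exhibiting an elementary abelian offender on $Z_x=\Omega_1(Z(Q_x))$ and checking that no nontrivial irreducible $\mathrm{GF}(3)$-module for $\alt(5)$ or $\SL(2,5)$ admits such an offender, whence $O^3(R_x)$ centralises $Z_x$, contradicting the Hauptlemma.

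The largest gap is in the cases $n=4,6$, which are the whole point of the theorem. Your proposal to ``analyse the action of a putative element of $G_x^{[2]}$ on $\Gamma_3(x)$ and eliminate configurations by order arithmetic on the suborbit structure'' cannot succeed: no counting of suborbits bounds $G_x^{[3]}$, which is exactly why local group-theoretic machinery is required. Nor does the regular-plinth machinery of Giudici--Morgan apply, since that work assumes a nilpotent plinth and $\alt(4)$ is not nilpotent (the paper says as much in the introduction). What the paper actually does is: inside $L$ take the two subgroups $T_1,T_2\cong\sym(n-1)$ containing $L_\omega$ (which are swapped by $N_L(L_\omega)$ and generate $L$), pull them back to subgroups $R_i^\circ=\langle (Q_xQ_y)^{R_i}\rangle$ of $G_x$ with $R_i^\circ/Q_x\cong\SL(2,2)$ or $\SL(2,4)$ and Sylow $2$-subgroup $Q_xQ_y$, show via the Hauptlemma that no nontrivial characteristic subgroup of $Q_xQ_y$ is normal in $R_i^\circ$, and then apply Stellmacher's pushing-up theorem to control $[Q_x,R_i^\circ]$ as a natural $\SL(2,2^f)$-module. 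Coprime action then yields $[G_x^{[2]},O^2(R_i^\circ)]\leqslant Z(R_i^\circ)$ and finally $[G_{xy}^{[2]},O^2(R_i^\circ)]=1$ for $i=1,2$; since $\langle O^2(R_1^\circ),O^2(R_2^\circ)\rangle$ is transitive on $\Gamma(x)$, the Hauptlemma gives $G_{xy}^{[2]}=1$ and hence $G_x^{[3]}=1$. None of this pushing-up structure appears in your proposal, and without it (or a genuine substitute) the exceptional cases remain open.
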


The result above depends in a crucial way on so-called `pushing up' results that allow us to control normalisers of $p$-subgroups and on `failure of factorisation'. The relevance of these methods has long been known, since Weiss' result on locally affine graphs  \cite{weissaffine}. The most striking application is the recent verification of the affine case of the  Weiss Conjecture  by Spiga \cite{spigaffine}. Pushing up results may be applied to graph-restrictive problems if the point-stabiliser contains a Sylow subgroup for a specific prime. This fails for the class of groups we consider here; instead we leverage a conjugacy class of  subgroups that do contain a Sylow subgroup for the specific prime. The ideas in this article are therefore reasonably elementary; however we expect that the application of the pushing up results in this novel way may suggest new avenues for work on the conjectures mentioned at the beginning of this introduction.

In Section 2 we assemble the tools needed for the proof of Theorem~\ref{thm: intro}, which is then given in Section 3. In Section 4 we provide a construction which shows that the equation $G_x^{[3]}=1$ in Theorem~\ref{thm: intro} cannot be improved for $n=4$. 

\section{preliminaries}
We use the following notation. If $\Gamma$ is a graph and $x\in V(\Gamma)$, $\Gamma(x)$ denotes the neighbourhood in $\Gamma$ of $x$. If $G\leqslant \aut(\Gamma)$ and $i$ is a non-negative integer,
 $G_x^{[i]}$ denotes the pointwise stabiliser of vertices at distance at most $i$ from $x$ in $\Gamma$. If $x$ and $y$ are adjacent vertices of $\Gamma$, we may write  $G_{xy}^{[i]}$ for   $G_x^{[i]} \cap G_y^{[i]}$. For $i=0$, we suppress the superscripts.

The so-called `Hauptlemma' below  is used repeatedly both throughout this investigation, and in most papers on this problem.

\begin{lemma}[Hauptlemma]
If $K$ is a subgroup of $G_{xy}$  that is normal in $G_e$ and such that $N_{G_x}(K)$ is transitive on $\Gamma(x)$, then $K=1$.
\end{lemma}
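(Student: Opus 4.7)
The plan is to prove $K = 1$ by showing that $K$ fixes every vertex of $\Gamma$; since graph automorphisms fixing all vertices are trivial and $\Gamma$ is connected, it suffices to propagate the fixing property from one vertex to each of its neighbours.

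First I would show that $K$ fixes $\Gamma(x)$ pointwise. Given $y' \in \Gamma(x)$, the transitivity of $N_{G_x}(K)$ on $\Gamma(x)$ furnishes $g \in N_{G_x}(K)$ with $y^g = y'$. Since $g$ normalises $K$ and $K$ fixes $y$, the conjugate $K = K^g$ fixes $y^g = y'$. Next, I would treat the other end of the edge $e = \{x,y\}$. Since $L$ is transitive on $\Gamma(x)$, the pair $(\Gamma, G)$ is arc-transitive, so there exists $t \in G$ interchanging $x$ and $y$; then $t \in G_e$, and $K^t = K$ because $K$ is normal in $G_e$. Conjugation by $t$ maps $N_{G_x}(K)$ onto $N_{G_y}(K)$ and $\Gamma(x)$ onto $\Gamma(y)$, so $N_{G_y}(K)$ is transitive on $\Gamma(y)$; applying the same argument as at $x$ shows $K$ fixes $\Gamma(y)$ pointwise.

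Finally I would propagate the hypotheses to every arc of $\Gamma$. For $y' \in \Gamma(x)$, choose $g \in N_{G_x}(K)$ with $y^g = y'$ and set $e' = \{x, y'\}$. Then $K = K^g \leqslant G_{xy}^g = G_{xy'}$, and $K = K^g$ is normal in $G_e^g = G_{e'}$, while $N_{G_x}(K)$ remains transitive on $\Gamma(x)$. Thus $K$ satisfies the hypotheses of the lemma with $y$ replaced by $y'$, so by the previous step $K$ fixes $\Gamma(y')$ pointwise and $N_{G_{y'}}(K)$ is transitive on $\Gamma(y')$. Iterating this along any walk in $\Gamma$ and invoking connectedness, $K$ fixes every vertex of $\Gamma$, whence $K = 1$.

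The only substantive point is that the propagation step is self-sustaining, that is, at each new vertex reached the normalisation and local-transitivity conditions continue to hold; this is exactly what conjugation by elements of $N_{G_x}(K)$ and by the edge-swap $t$ guarantees, so no additional hypothesis is needed.
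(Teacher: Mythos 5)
Your proof is correct and is exactly the standard argument for this folklore lemma, which the paper states without proof: one propagates the invariant ``$K\leqslant G_{uv}$, $K$ normal in $G_{\{u,v\}}$, $N_{G_u}(K)$ transitive on $\Gamma(u)$'' along arcs by conjugating with elements of the normaliser and with edge-swaps (available by arc-transitivity of a locally $L$ pair), and connectedness plus faithfulness of $G$ on $V(\Gamma)$ then forces $K=1$. No gaps.
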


Let $X$ be a $p$-group. We denote by $\Omega_1(Z(X))$ the subgroup of  $Z(X)$, the centre of $X$, generated by the elements of order $p$. We  use the following  definition of the Thompson subgroup: $J(X)$ is the subgroup of $X$ generated by the maximal (by order) elementary abelian subgroups of $X$. The group $J(X)$ is a characteristic subgroup of $X$, and it has the property that if $Y$ is a subgroup of $X$ with $J(X) \leqslant Y$, then $J(X) = J(Y)$. (See \cite[(32.1)]{Aschbacher}.)

For an arbitrary group $G$ and  $p$ a prime, $O_p(G)$ denotes the largest normal $p$-subgroup of $G$ and $O^p(G)$ denotes the smallest normal subgroup of $G$ such that $G/O^p(G)$ is a $p$-group. The generalised Fitting subgroup $F^*(G)$ is the product of $F(G)$, the largest normal nilpotent subgroup of $G$ and $E(G)$, the layer of $G$. The generalised Fitting subgroup has the property that $C_X(F^*(G))\leqslant F^*(G)$. We refer the reader to \cite[Chapter 11]{Aschbacher} for more details. 

The following is a Thompson-Wielandt style theorem proved for the semiprimitive case by Spiga.

\begin{theorem}[{\cite[Corollary 3]{SpigaTW}}]
\label{thm:tw}
Let $L$ be a semiprimitive group, $(\Gamma,G)$ be a locally $L$ pair and $\{x,y\}$ an edge of $\Gamma$. Then there is a prime $p$ such that   $G_{xy}^{[1]}$ is a $p$-group and  one of the following holds:
\begin{enumerate}
\item $G_{xy}^{[1]}=1$, or
\item $F^*(G_{xy}) = O_p(G_{xy})$.
\end{enumerate}
\end{theorem}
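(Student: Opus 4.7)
The plan is to adapt the classical Thompson-Wielandt strategy to the semiprimitive setting. The central device I would set up first is the following consequence of semiprimitivity of $L$: if $N \trianglelefteq G_x$ and $N \leqslant G_{xy}$, then $N \leqslant G_x^{[1]}$. Indeed, $N^{\Gamma(x)}$ is a normal subgroup of $L = G_x^{\Gamma(x)}$ fixing the point $y \in \Gamma(x)$. Semiprimitivity of $L$ forbids $N^{\Gamma(x)}$ from being transitive, while if it were semiregular, the non-trivial stabiliser at $y$ would force $N^{\Gamma(x)} = 1$. This principle serves as the replacement for the local primitivity hypothesis used in the classical Thompson-Wielandt theorem.

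Assume $G_{xy}^{[1]} \neq 1$; otherwise we are in case (1) with any prime $p$. I would first show $E(G_{xy}) = 1$. A component $L_0$ of $G_{xy}$ is characteristic in $E(G_{xy})$ and so normal in $G_{xy}$, and a standard edge-swap argument using an element $g \in G_e$ interchanging $x$ and $y$ promotes a suitable join of conjugates of $L_0$ to a subgroup $M \leqslant G_{xy}$ that is normal in $G_x$. The key principle then places $M$ in $G_x^{[1]}$, and the symmetric argument at $y$ yields $M \leqslant G_{xy}^{[1]}$. A preliminary reduction or minimal-counterexample argument shows that $G_{xy}^{[1]}$ is solvable, contradicting the quasi-simplicity of $L_0$.

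With $F^*(G_{xy}) = F(G_{xy})$ nilpotent, suppose for contradiction that $O_p(G_{xy})$ and $O_q(G_{xy})$ are both non-trivial for distinct primes $p, q$. Both are characteristic in $G_{xy}$ and hence normal in $G_e$. I would apply the Hauptlemma to one of these subgroups after exhibiting, via a Frattini argument inside $G_x$ applied to a suitable Sylow-type subgroup, a normaliser in $G_x$ that is transitive on $\Gamma(x)$; this yields the required contradiction, so $F(G_{xy}) = O_p(G_{xy})$ for a single prime $p$. Finally, the self-centralising property $C_{G_{xy}}(F^*(G_{xy})) \leqslant F^*(G_{xy})$ together with coprime action forces $G_{xy}^{[1]} \leqslant O_p(G_{xy})$: any $q$-element of $G_{xy}^{[1]}$ with $q \neq p$ would centralise $O_p(G_{xy})$ and hence lie in it. The main obstacle throughout is the promotion step -- producing at each stage either a $G_x$-normal subgroup so that the semiprimitive principle applies, or a transitive normaliser so that the Hauptlemma applies -- and this is where the genuine work of \cite{SpigaTW} resides.
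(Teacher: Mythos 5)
This statement is not proved in the paper at all: it is quoted verbatim from Spiga \cite[Corollary 3]{SpigaTW}, so there is no internal proof to compare against and any proof you supply must stand on its own. Your sketch reproduces the right general flavour of a Thompson--Wielandt argument, and your opening ``key principle'' (a $G_x$-normal subgroup contained in $G_{xy}$ lies in $G_x^{[1]}$, because a normal subgroup of a semiprimitive group with a fixed point is trivial) is correct and genuinely useful. But as a proof the proposal has gaps that are not merely technical. The most serious is the last step: coprime action does \emph{not} imply that a $q$-element of $G_{xy}^{[1]}$ centralises $O_p(G_{xy})$ --- a $q$-group can of course act faithfully on a $p$-group, and Lemma~\ref{lem:coprime action} only controls iterated commutators, not centralisation. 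So your deduction that $G_{xy}^{[1]}$ is a $p$-group from $F^*(G_{xy})=O_p(G_{xy})$ collapses; in \cite{SpigaTW} the fact that $G_{xy}^{[1]}$ is a $p$-group is the substantive theorem, established by a careful analysis of $[G_x^{[1]},G_y^{[1]}]\leqslant G_{xy}^{[1]}$ and the Fitting subgroups of these groups, and the statement about $F^*(G_{xy})$ is derived afterwards.

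Two further points. First, in the step eliminating a second prime you propose to apply the Hauptlemma to one of $O_p(G_{xy})$, $O_q(G_{xy})$ after ``exhibiting a transitive normaliser via a Frattini argument'', but you give no mechanism that distinguishes the two primes: both subgroups are characteristic in $G_{xy}$ and hence normal in $G_e$, and in conclusion (2) the group $O_p(G_{xy})$ is nontrivial, so its $G_x$-normaliser is necessarily \emph{intransitive}. An argument that produces a transitive normaliser for an unspecified one of the two would prove too much. Second, the elimination of components rests on an asserted solvability of $G_{xy}^{[1]}$ ``by a preliminary reduction or minimal-counterexample argument'', which is essentially the conclusion you are trying to prove, and on the claim that a single component is characteristic in $E(G_{xy})$, which is false in general (components may be permuted; only suitable products of them are characteristic). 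You have correctly located where the real work lies, but that work is not done here.
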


The pushing up result  that we need concerns the groups $\SL(2,2^n)$. Such a result was originally proved by Baumman \cite{baumann}. We cite below a generalisation due to Stellmacher that is more suited to our purpose.

%\begin{theorem}[{\cite[Theorem 24.37]{RowleyParker}}]
%\label{thm: BN}
%Let $R$ be a finite group with a Sylow $p$-subgroup $S$ such that 
%\begin{itemize}
%\item $R/O_p(R) \cong \SL(2,p^n)$;
%\item $C_R(O_p(R)) \leqslant O_p(R)$;
%\item no non-trivial characteristic subgroup of $S$ is normal in $R$.
%\end{itemize}
% Then $O_p(R) / C_{O_p(R)}(R) \cong p^{2n}$ is the natural module for $R/O_p(R)$.
%\end{theorem}

\begin{theorem}[{\cite[Theorem 1]{Stellmacher}}]
\label{thm: Stell}
Let $M$ be a finite group, $p$ a prime and $S$ a Sylow $p$-subgroup of $M$ such that no non-trivial characteristic subgroup of $S$ is normal in $M$. Assume that
$$\overline{M}/\Phi(\overline{M}) \cong \PSL(2,p^f) \quad \text{for} \quad \overline{M} := M/O_p(M)$$
and set $V=[O_p(M),O^p(M)]$.
Then either $S$ is elementary abelian, or there exists an automorphism $\alpha \in \aut(S)$ such that 
$$L/V_0O_{p'}(L) \cong \SL(2,p^f) \quad \text{for} \quad L:=O^p(M)V^\alpha \quad \text{and}\quad V_0 = V(L \cap Z(M))$$
and one of the following holds:
\begin{itemize}
\item[(a)] $V \leqslant Z(O_p(M))$ and $V$ is a natural $\SL(2,2^f)$-module for $L/V_0O_{p'}(L)$.
\item[(b)] $V \leqslant Z(O_p(M))$, $p=2$ and $f>1$, and $V/ V \cap Z( M)$ is a natural $\SL(2,p^f)$-module for $L/V_0O_{p'}(L)$.
\item[(c)] $Z(V) \leqslant Z(O_p(M))$, $p\neq 2$, $\Phi(V) = V \cap Z(M)$ has order $p^f$, and $V/Z(V)$ and $Z(V)/\Phi(V)$ are natural $\SL(2,p^f)$-modules for $L/V_0O_{p'}(L)$.
\end{itemize}
\end{theorem}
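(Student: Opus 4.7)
The plan is to apply the classical machinery of finite local group theory: the Thompson subgroup, failure-of-factorisation (FF) offenders, and the known classification of FF-modules for $\PSL(2,p^f)$. Throughout I set $P := O_p(M)$ and keep in mind $V = [P, O^p(M)]$.

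First I would exploit the pushing-up hypothesis. Since both $J(S)$ and $\Omega_1(Z(J(S)))$ are characteristic in $S$, neither can be normal in $M$. Applied to $W := \Omega_1(Z(P))$, Timmesfeld's replacement theorem then produces an elementary abelian offender $A \leqslant S$ with $A \not\leqslant P$ satisfying $|A/(A \cap C_M(W))| \geqslant |W/C_W(A)|$, and one may pass to an FF-offender on a suitable chief factor of $V$. The characteristic-not-normal hypothesis is what keeps this offender alive, and simultaneously shows $V \neq 1$ (otherwise $J(S)$ would be normal).

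Next I would use the quotient structure to identify the modules. The image $AP/P$ is a non-trivial unipotent elementary abelian subgroup of $\overline{M}$, and since $\overline{M}/\Phi(\overline{M}) \cong \PSL(2,p^f)$ the image of $AP/P$ lies inside a root subgroup of order $p^f$ of a Borel of $\PSL(2,p^f)$. The Meierfrankenfeld--Stellmacher classification of FF-modules for $\PSL(2,p^f)$ then forces the active chief factor to be a natural $\SL(2,p^f)$-module, up to field/Galois twist. With this in hand I would define $\alpha \in \aut(S)$ by conjugating inside a second $p$-local subgroup of $M$ so that $L := O^p(M)V^\alpha$ picks up a root group opposite to a fixed Borel, ensuring that $L/V_0 O_{p'}(L) \cong \SL(2,p^f)$ as claimed.

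Finally I would split according to the prime and the abelian/non-abelian character of $V$. When $p=2$ and $V$ is abelian, $V \leqslant Z(P)$; either $V$ is itself the natural module (case (a)) or, for $f > 1$, a central trivial summand splits off leaving $V/(V \cap Z(M))$ natural (case (b)); the $f>1$ restriction comes from the fact that $\PSL(2,2) \cong S_3$ has no non-trivial Frattini extension that produces the module structure in (b). When $p$ is odd, a commutator calculation in $V$ together with iterating the pushing-up hypothesis on $V/Z(V)$ forces $V$ to have class exactly $2$, with $\Phi(V) = V \cap Z(M)$ of order $p^f$ and both $V/Z(V)$ and $Z(V)/\Phi(V)$ natural modules, yielding case (c).

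The main obstacle, I expect, is precisely the extraspecial-type reduction in case (c): the FF-module classification by itself does not deliver the non-abelian shape of $V$, and one must combine the characteristic-not-normal hypothesis with an internal analysis of the commutator map $V \times V \to \Phi(V)$ to pin down $\Phi(V)$ as a one-dimensional natural chief factor, rather than a bigger central section. A secondary obstacle is the correct choice of $\alpha$ and the verification that $\Phi(\overline{M})$ does not disturb the module identifications after passing to $\overline{L}$; this needs one to track how $\Phi(\overline{M})$ (essentially a central $p'$-obstruction to $\overline{M}$ being quasi-simple) acts on the chief factors of $V$.
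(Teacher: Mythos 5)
The paper does not prove this statement at all: it is quoted verbatim as \cite[Theorem 1]{Stellmacher} and used as a black box, so there is no internal proof to compare your sketch against. Judged on its own terms, your outline correctly identifies the standard opening moves of pushing-up arguments (neither $J(S)$ nor $\Omega_1(Z(J(S)))$ can be normal in $M$, Timmesfeld replacement produces an offender, and the FF-module classification for $\PSL(2,p^f)$ points at natural modules), but it has a concrete gap at the step that is actually the heart of the theorem.

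The gap is the construction of $\alpha$. You propose to obtain $\alpha\in\aut(S)$ ``by conjugating inside a second $p$-local subgroup of $M$''. But $V=[O_p(M),O^p(M)]$ is normal in $M$, so any automorphism of $S$ induced by conjugation by an element of $M$ normalising $S$ satisfies $V^\alpha=V$; your recipe then gives $L=O^p(M)V$, which is normal in $M$, and no ``opposite root group'' has been picked up. The whole point of Stellmacher's statement is that $\alpha$ is an abstract automorphism of the $p$-group $S$, in general not induced by any element of $M$, and producing it requires the Baumann-type analysis of $B(S)=C_S(\Omega_1(Z(J(S))))$ and a careful comparison of $S$ with the Sylow subgroup of the target configuration --- this occupies most of Baumann's and Stellmacher's papers and is not recoverable from the FF-module classification alone. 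A secondary gap, which you partly acknowledge, is case (c): the offender machinery identifies the chief factors of $V$ but does not by itself force $V$ to be of class two with $\Phi(V)=V\cap Z(M)$ of order exactly $p^f$; that requires a separate internal commutator analysis. As used in this paper the theorem is an imported result, so none of this needs to be reproved here, but your sketch should not be mistaken for a proof of it.
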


The following result belongs to the theory of coprime action.

\begin{lemma}
\label{lem:coprime action}
Let $R$ be a group acting on a $p$-group $X$. Then \begin{itemize}
\item[(i)] for a  $q$-subgroup $S$ of $R$ with $q\neq p$,   $[X,S]=[X,S,S]$,
\item[(ii)] $[X,O^p(R),O^p(R)] = [X,O^p(R)]$.\end{itemize}
\end{lemma}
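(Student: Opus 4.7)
The plan is to treat both parts as standard coprime-action facts, proving (i) directly from the fundamental identity, and then deducing (ii) from (i) by expanding commutators along generators of $O^p(R)$.

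For (i), I would invoke the classical coprime-action theorem that whenever a group $A$ acts on a group $X$ of order coprime to $|A|$ one has $X = [X,A]\,C_X(A)$ (cf.~\cite[Chapter 8]{Aschbacher}). Specialising to $A = S$ acting on the $p$-group $X$ (recall $S$ is a $q$-group with $q \neq p$, so $|S|$ is coprime to $|X|$), every generator of $[X,S]$ has the form $[uc, s]$ with $u \in [X,S]$, $c \in C_X(S)$ and $s \in S$. The commutator identity $[uc, s] = [u, s]^c [c, s] = [u, s]^c$, combined with the fact that $[X, S, S]$ is a normal subgroup of $X$ (hence preserved under conjugation by $c$), yields the inclusion $[X,S] \leq [X,S,S]$; the reverse containment is automatic.

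For (ii), set $T := O^p(R)$ and observe that every $p'$-element of $R$ lies in $T$ (since $R/T$ is a $p$-group), so $T$ is generated by its $p'$-elements. By primary decomposition every $p'$-element is a product of commuting $q$-elements for various primes $q \neq p$, so $T$ is in fact generated by elements of prime-power $p'$-order. For any such $t$, the cyclic group $\langle t \rangle$ is a $q$-subgroup of $R$, and part (i) gives
$$[X, t] \leq [X, \langle t \rangle] = [X, \langle t \rangle, \langle t \rangle] \leq [X, T, T].$$
For a general $t = s_1 \cdots s_m \in T$ with each $s_j$ of prime-power $p'$-order, iterated use of the commutator identity $[x, ab] = [x, b][x, a]^b$ expresses $[x, t]$ as a product of $T$-conjugates of commutators of the form above. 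Since $[X, T, T]$ is $T$-invariant it absorbs all such conjugates, yielding $[X, T] \leq [X, T, T]$; again the reverse inclusion is trivial. No substantial obstacle is anticipated, as these are textbook coprime-action manipulations; the only point requiring care is ensuring the generator expansion in (ii) stays inside the $T$-invariant target.
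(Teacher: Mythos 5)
Your argument is correct in substance, but note that the paper does not prove this lemma at all: its ``proof'' is the single line ``See \cite[(24.5)]{Aschbacher}''. What you have written out is essentially the standard argument underlying that citation (the decomposition $X=[X,S]C_X(S)$ for part (i), and generation of $O^p(R)$ by its $p'$-elements plus primary decomposition for part (ii)), so you are supplying the content the paper outsources rather than taking a genuinely different route. One step deserves a cleaner justification: in (i) you assert that $[X,S,S]$ is normal in $X$ in order to conclude $[u,s]^c\in[X,S,S]$. A priori that normality is not obvious (it follows only once the lemma is proved, since then $[X,S,S]=[X,S]\trianglelefteq X$). What you actually need, and what is true for an elementary reason, is that $[X,S,S]=[[X,S],S]$ is invariant under conjugation by $c\in C_X(S)$: such a $c$ normalises $[X,S]$ (which is normal in $X$) and centralises, hence normalises, $S$, so it normalises the commutator $[[X,S],S]$. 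With that one-line repair, part (i) is complete; part (ii) is handled correctly, including the point that $[X,T,T]$ is $T$-invariant and therefore absorbs the conjugated factors arising from the expansion $[x,ab]=[x,b][x,a]^b$.
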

\begin{proof}
See \cite[(24.5)]{Aschbacher}.
\end{proof}

%V < Z (Op(M)) and V is a natural SLz(p")-module for L/VoOp,(L).
%V < Z(Op(M)), p = 2 and n > 1, and V/V c~Z(M) is a natural SLa(p")-modulefor
%L~Vo0r (L).
%(c) Z(V) <=Z(Op(M)), p # 2, q~(V) = V ~Z(M) has order p", and V/Z(V) and
%Z (V)/q~ (V) are natural SL2(p")-modules for L/V oOv,(L).

\begin{lemma}
\label{lem: approx}
Let $n\geqslant 4$ and let $L\cong \sym(n)$ be the permutation group induced by $\sym(n)$ on the set $\Omega$ of ordered pairs of distinct elements of $\{1,2,\ldots,n\}$. For $\omega \in \Omega$, there is a $N_L( L_\omega )$-conjugacy class of subgroups $\{T_1, T_2\}$  such that the following hold:
\begin{enumerate}
\item $L_\omega \leqslant T_i$;
\item $T_i\cong \sym(n-1)$;
\item $L = \langle N_L( L_ \omega), T_i \rangle$;
\item $L = \langle T_1, T_2   \rangle$.
\end{enumerate}
Furthermore, if there is a prime $p$ such that $L_\omega$ has a normal $p$-subgroup, then $L_\omega$ contains a Sylow $p$-subgroup of $T_i$ for $i=1,2$ and  $\langle O^p(T_1), O^p(T_2) \rangle$ is transitive on $\Omega$.
\end{lemma}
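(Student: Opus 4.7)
My proof proposal will take the natural choice of $T_1 := L_a$ and $T_2 := L_b$, the one-point stabilisers in $L$ of the first and second coordinates of $\omega = (a,b)$. Each is isomorphic to $\sym(n-1)$ and contains $L_\omega = \sym(\{1,\dots,n\}\setminus\{a,b\})$ as a one-point stabiliser, so parts (1) and (2) are immediate.

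Next I will identify $N_L(L_\omega)$. For $n\geq 4$ the set of fixed points of $L_\omega$ on $\{1,\dots,n\}$ is exactly $\{a,b\}$, so the normaliser of $L_\omega$ in $L$ coincides with the setwise stabiliser of $\{a,b\}$, namely $L_\omega \rtimes \langle (a\,b)\rangle$. The transposition $(a\,b)$ then conjugates $T_1$ onto $T_2$, so $\{T_1,T_2\}$ forms a single $N_L(L_\omega)$-orbit. Since $T_i \cong \sym(n-1)$ is a maximal subgroup of $L \cong \sym(n)$ for every $n\geq 3$, and since $(a\,b)\notin T_i$ and $T_1\neq T_2$, the subgroups $\langle N_L(L_\omega), T_i\rangle$ and $\langle T_1,T_2\rangle$ both properly contain $T_i$ and hence equal $L$, giving (3) and (4).

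For the final assertion I will dispatch a small case analysis. The group $L_\omega\cong\sym(n-2)$ admits a nontrivial normal $p$-subgroup only when $n-2\in\{2,3,4\}$, because $\alt(m)$ is non-abelian simple for $m\geq 5$. These three cases are $n=4$ (with $p=2$, $L_\omega\cong C_2$), $n=5$ (with $p=3$, $L_\omega\cong\sym(3)$) and $n=6$ (with $p=2$, $L_\omega\cong\sym(4)$). Comparing the Sylow $p$-orders of $L_\omega$ and $T_i\cong\sym(n-1)$ in each case shows that $|L_\omega|_p=|T_i|_p$, so $L_\omega$ does carry a Sylow $p$-subgroup of $T_i$. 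For the transitivity statement, when $n=5$ the group $\sym(4)$ has no normal subgroup of nontrivial $3$-power index, so $O^3(T_i)=T_i$ and the generated subgroup is simply $L$. When $n\in\{4,6\}$ we have $O^2(T_i)\cong\alt(n-1)$, a point-stabiliser in $\alt(n)$; since $\alt(n-1)$ is maximal in $\alt(n)$ and the two copies are distinct, together they generate $\alt(n)$, which is $2$-transitive and hence transitive on ordered pairs.

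The argument is essentially verification on top of standard facts about $\sym(n)$ and $\alt(n)$, so I do not anticipate a genuine obstacle. The only delicate point is confirming that the three exceptional small-$n$ cases exhaust the situations in which $O_p(L_\omega)\neq 1$; once that is noted the remaining computations are routine Sylow and maximality arguments.
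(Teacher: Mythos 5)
Your proof is correct and follows essentially the same route as the paper: taking $T_1=L_a$, $T_2=L_b$, identifying $N_L(L_\omega)$ with the setwise stabiliser $L_{\{a,b\}}$, and invoking maximality of point stabilisers for (3) and (4). The only difference is that you write out the case analysis for $(n,p)\in\{(4,2),(5,3),(6,2)\}$ that the paper dismisses as ``an easy calculation'', and your verification there is accurate.
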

\begin{proof}
Let $\omega = (a,b)$, so that $L_\omega = L_a \cap L_b  \cong \sym(n-2)$. Observe that $L_\omega$ is contained in $L_a$, $L_b$ and $L_{\{a,b\}}$. Additionally, $N_L(L_\omega) = L_{\{a,b\}}$ so that $\{L_a^{N_L(L_\omega)} \} = \{ L_a, L_b\}$.
% For $n\geqslant 5$, $N_L(L_\omega)$ is maximal, whereas for $n=4$, $N_L(L_\omega)$ has index two in a Sylow $2$-subgroup of $\sym(4)$. 
Thus (1)--(2) hold for $T_1=L_a$ and $T_2=L_b$. Since each $T_i$ is maximal in $L$, (3) and  (4) are immediate. For the final part, we must have $n\in \{4,5,6\}$, and it is an easy calculation to check $\langle O^p(T_1), O^p(T_2) \rangle$ is either $\alt(n)$ or $\sym(n)$.
\end{proof}

\section{Proof of the main theorem}

Let $n\geqslant 3$ be an integer and let  $L$  be the permutation group induced by the action of $\sym(n)$ on  $\Omega = \{(a,b) : 1 \leqslant a,b \leqslant n, a\neq b\}$.   Let $(\Gamma, G)$ be a locally $L$ pair and let $\{x,y\}$ be an edge of $\Gamma$. If $n=3$, then $L$ is regular, so $G_x^{[1]}=1$. Henceforth assume that $n\geqslant 4$ and that $G_{xy}^{[1]}\neq 1$. After applying  Theorem~\ref{thm:tw}  there is a prime $p$ such that $G_{xy}^{[1]}$,  $S_{xy}=F^*(G_{xy})$ and $Q_x=F^*(G_x^{[1]})$ are nontrivial $p$-groups. We set the following notation.

\begin{eqnarray*}
S_{xy} & = &  O_p(G_{xy})\\
Z_{xy} & = & \Omega_1(Z(S_{xy})) \\
Q_x & =&  O_p(G_x^{[1]})\\
Z_x & = & \Omega_1(Z(Q_x))
\end{eqnarray*}
and for $g\in G$ with $x^g=u$ and $y^g=v$, we define $S_{uv}=(S_{xy})^g$, $Q_u=(Q_x)^g$, etc. Since $Q_x$ and $S_{xy}$ are nontrivial, $Z_x$ and $Z_{xy}$ are nontrivial.

\begin{claim}
\label{claim: semireg cents}
The groups $C_{G_x}(Q_x)$ and $C_{G_x}(Z_x)$ are intransitive and semiregular on $\Gamma(x)$. 
\end{claim}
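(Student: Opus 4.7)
The plan is to reduce both claims to a single Hauptlemma application by exploiting the inclusion $Z_x \leq Q_x$. This inclusion immediately yields $C_{G_x}(Q_x) \leq C_{G_x}(Z_x)$, and because intransitivity and semiregularity are inherited by subgroups, it suffices to prove the claim for $C := C_{G_x}(Z_x)$. Since $Z_x$ is normal in $G_x$ (being characteristic in $G_x^{[1]}$), so is $C$, and hence $C^{\Gamma(x)}$ is a normal subgroup of $L \cong \sym(n)$. From the list of normal subgroups of $\sym(n)$, the only intransitive ones on the set of ordered pairs are the trivial group and, when $n=4$, the normal Klein four group of $\sym(4)$; both act semiregularly on $\Omega$. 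Hence semiregularity follows automatically once intransitivity is established.

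For intransitivity, I would take $K := Z_x \cap Z_y$ and aim to apply the Hauptlemma. The hypotheses are routine to check: $K \leq G_{xy}^{[1]} \leq G_{xy}$; both $Z_x$ and $Z_y$ are normal in $G_{xy}$ (each being characteristic in the respective vertex-pointwise stabiliser), so $K \triangleleft G_{xy}$; and local transitivity of $L$ together with the vertex-transitivity of $G$ makes $(\Gamma, G)$ arc-transitive, giving an element of $G_e$ that swaps $x$ and $y$ and so interchanges $Z_x$ and $Z_y$, yielding $K \triangleleft G_e$. Now $C$ fixes $K$ pointwise and hence $C \leq N_{G_x}(K)$; combined with $G_{xy} \leq N_{G_x}(K)$, this gives $C G_{xy} \leq N_{G_x}(K)$. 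If $C$ were transitive on $\Gamma(x)$, the orbit-stabiliser theorem would yield $G_x = C G_{xy}$, so $N_{G_x}(K) = G_x$ would be transitive on $\Gamma(x)$, and the Hauptlemma would force $K = 1$.

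The main obstacle is then producing a nontrivial element of $Z_x \cap Z_y$, for which I would work one level up at the edge and pass through $Z_{xy}$. Because $Q_x$ is a normal $p$-subgroup of $G_{xy}$ we have $Q_x \leq S_{xy}$, so $Z_{xy} \leq Z(S_{xy})$ centralises $Q_x$; together with the generalised Fitting identity $C_{G_x^{[1]}}(Q_x) = Z(Q_x)$ (coming from $F^*(G_x^{[1]}) = Q_x$) and the fact that $Z_{xy}$ is elementary abelian, this yields $Z_{xy} \cap G_x^{[1]} \leq \Omega_1(Z(Q_x)) = Z_x$. Symmetry at $y$ gives $Z_{xy} \cap G_y^{[1]} \leq Z_y$, so $Z_{xy} \cap G_{xy}^{[1]} \leq Z_x \cap Z_y$. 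Finally, a standard class equation argument for the $p$-group $S_{xy}$ acting by conjugation on its nontrivial normal $p$-subgroup $G_{xy}^{[1]}$ gives $Z(S_{xy}) \cap G_{xy}^{[1]} \neq 1$, and extracting an element of order $p$ gives $Z_{xy} \cap G_{xy}^{[1]} \neq 1$, contradicting $K = 1$.
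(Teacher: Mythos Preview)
Your argument is correct. Both you and the paper rely on the same two ingredients: a Hauptlemma application with a suitable nontrivial $G_e$-normal subgroup, and the observation that $Z_{xy}$ (or a piece of it) lands inside $Z_x$ via $Q_x \leqslant S_{xy}$ together with $C_{G_x^{[1]}}(Q_x)\leqslant Q_x$. The organisation differs, though. The paper treats $C_{G_x}(Q_x)$ first, using $K=G_{xy}^{[1]}$ in the Hauptlemma; then, having the semiregularity of $C_{G_x}(Q_x)$ in hand, it deduces $C_{G_{xy}}(Q_x)=C_{G_x^{[1]}}(Q_x)\leqslant Q_x$ and hence the \emph{full} inclusion $Z_{xy}\leqslant Z_x$, after which a second Hauptlemma application with $K=Z_{xy}$ handles $C_{G_x}(Z_x)$. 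You instead reduce immediately to $C_{G_x}(Z_x)$ via $Z_x\leqslant Q_x$, take $K=Z_x\cap Z_y$, and manufacture a nontrivial element of $K$ from $Z_{xy}\cap G_{xy}^{[1]}$. Your route avoids needing the intermediate $Q_x$-step but only yields the weaker inclusion $Z_{xy}\cap G_{xy}^{[1]}\leqslant Z_x$; the paper's version delivers $Z_{xy}\leqslant Z_x$ outright, which is a cleaner byproduct even if it is not explicitly reused later. Finally, your case analysis of normal subgroups of $\sym(n)$ is fine but unnecessary: the semiprimitivity of $L$ already gives that any intransitive normal subgroup of $G_x^{\Gamma(x)}$ is semiregular, which is exactly how the paper (implicitly) passes from intransitivity to semiregularity.
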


Since $G_{xy}^{[1]} \leqslant Q_x \cap Q_y$, the Hauptlemma shows that $C_{G_x}(Q_x)$ cannot be transitive on $\Gamma(x)$. Hence $C_{G_{xy}}(Q_x) = C_{G_x^{[1]} }(Q_x) \leqslant Q_x$. 
Now, since $Q_xQ_y \leqslant S_{xy}$, we see $Z_{xy}$ centralises $Q_x$, and so $Z_{xy} \leqslant C_{G_{xy}}(Q_x) = C_{G_x^{[1]}}(Q_x) \leqslant Q_x$. Thus $Z_{xy} \leqslant Z_x$. Since $Z_{xy}$ is nontrivial and normal in $G_e$,  the Hauptlemma shows $C_{G_x}(Z_x)$ must be intransitive on $\Gamma(x)$.

  Since $S_{xy}$ is normal in $G_e$ and  $Q_x$ is normal in $G_x$, the Hauptlemma shows that $S_{xy}\neq Q_x$. It follows that $S_{xy} \nleqslant G_x^{[1]}$ and hence  $S_{xy}^{\Gamma(x)}$ is a nontrivial normal $p$-subgroup of $G_{xy}^{\Gamma(x)} \cong \sym(n-2)$. Thus $(n,p)\in \{(4,2),(5,3),(6,2)\}$. (In particular, if $n\geqslant 7$ we have shown $G_{xy}^{[1]}=1$.)

Since $Q_x \neq Q_y$ (otherwise $Q_xQ_y=Q_x$ is normalised by $\langle G_x, G_e \rangle$), we have $(Q_xQ_y)^{\Gamma(x)}$ is a nontrivial normal $p$-subgroup of $G_{xy}^{\Gamma(x)}$.  Considering the various cases for $G_{xy}^{\Gamma(x)}$, it follows that $S_{xy} = Q_xQ_y$.

Let $R_x = \langle Q_xQ_y^{G_x} \rangle$. Now $R_x/Q_x$ is a central extension of $(R_x \cap G_x^{[1]})/Q_x$ (see \cite[Lemma 2.5]{spigaffine}) and since $Q_x = O_p(G_x^{[1]})$, $(R_x \cap G_x^{[1]} )/Q_x$ has order prime to $p$. Further, $R_x / (R_x \cap G_x^{[1]}) \cong R_x^{\Gamma(x)}$ is isomorphic to  $\sym(4)$, $\alt(5)$ or $\alt(6)$, if $n=4,5,6$, respectively.

\begin{claim}
\label{claim: p=2} 
We have $n=4$ or $6$ and $p=2$.
\end{claim}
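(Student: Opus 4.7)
The aim is to rule out the remaining possibility $(n,p) = (5,3)$ among $\{(4,2), (5,3), (6,2)\}$; this leaves only the cases $p=2$ with $n\in\{4,6\}$, as required. Suppose for contradiction that $n = 5$ and $p = 3$. Then $R_x^{\Gamma(x)} \cong \alt(5)$ and, since $R_x/Q_x$ is a central $3'$-extension of $\alt(5)$, a Sylow $3$-subgroup of $R_x/Q_x$ has order $|\alt(5)|_3 = 3$. Combined with $|S_{xy}/Q_x| = |S_{xy}^{\Gamma(x)}| = |\alt(3)| = 3$, this makes $S_{xy}$ a Sylow $3$-subgroup of both $R_x$ and $G_{xy}$.

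The first step is to observe that $C_{G_x}(Z_x) \leqslant G_x^{[1]}$. Since $C_{G_x}(Z_x)$ is normal in $G_x$, its image in $G_x/G_x^{[1]} \cong L = \sym(5)$ is a normal subgroup of $\sym(5)$; by Claim~\ref{claim: semireg cents} this image is intransitive and semiregular on the $20$ ordered pairs of $\Gamma(x)$. But $\alt(5)$ and $\sym(5)$ are both transitive with nontrivial point stabilisers, so are not semiregular, and only the trivial image survives. Consequently every $3$-element of $G_x \setminus G_x^{[1]}$ acts non-trivially on $Z_x$, and in particular $Z_{xy}$ is a proper subgroup of $Z_x$.

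The second step is to apply Lemma~\ref{lem: approx} with $p = 3$ to produce subgroups $T_1, T_2 \cong \sym(4)$ of $L$, each containing $L_\omega$ with $L_\omega$ containing a Sylow $3$-subgroup of $T_i$, such that $\langle T_1, T_2 \rangle = L$ and $\langle O^3(T_1), O^3(T_2) \rangle$ is transitive on $\Omega$. Lifting to preimages $H_i \leqslant G_x$, each $H_i$ contains $G_{xy}$ and has $S_{xy}$ as a Sylow $3$-subgroup. I would then invoke Theorem~\ref{thm: Stell} inside a carefully-chosen $3$-local subgroup $M_i \leqslant H_i$, whose quotient modulo $O_3(M_i)$ has $\alt(4) \cong \PSL(2,3)$ as Frattini quotient (necessary because $\sym(4) \not\cong \PSL(2, 3^f)$ for any $f$, which prevents direct application to $H_i$). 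As $p = 3$ is odd, option~(c) of Theorem~\ref{thm: Stell} applies and constrains $V_i = [O_3(M_i), O^3(M_i)]$ as an $\SL(2,3)$-module. Splicing the two pieces of module information via Lemma~\ref{lem:coprime action} and using that $\langle O^3(T_1), O^3(T_2)\rangle$ is transitive on $\Omega$, one produces a non-trivial characteristic subgroup $K$ of $S_{xy}$ whose $G_x$-normaliser is transitive on $\Gamma(x)$. Since $K$ is characteristic in $S_{xy} \trianglelefteq G_e$, it is normal in $G_e$; the Hauptlemma then forces $K = 1$, contradicting $1 \neq Z_{xy} \leqslant K$.

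\textbf{Main obstacle.} The difficulty is twofold. First, Theorem~\ref{thm: Stell} cannot be applied to $H_i$ directly since $\sym(4) \not\cong \PSL(2, 3^f)$, so one must descend to an appropriate $3$-local subgroup projecting onto $\alt(4)$ and handle the residual $3'$-quotient separately. Second, gluing the two local module descriptions from $H_1$ and $H_2$ into a single globally $G_x$-invariant characteristic subgroup requires careful tracking of the Thompson or Baumann subgroups of $S_{xy}$. A possibly cleaner route avoids pushing-up altogether: apply the Frattini argument $G_x = R_x \cdot N_{G_x}(S_{xy})$ to the normal subgroup $R_x \trianglelefteq G_x$ and combine with Lemma~\ref{lem: approx} to show directly that $N_{G_x}(Z_{xy})$ acts transitively on $\Gamma(x)$, immediately violating the Hauptlemma.
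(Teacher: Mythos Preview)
Your proposal is not a proof but a plan with self-acknowledged holes, and neither branch closes. The Stellmacher route stalls exactly where you say it does: passing from $H_i$ (with top $\sym(4)$) to a subgroup with top $\alt(4)\cong\PSL(2,3)$ costs you normality in the full $H_i$, so the ``no non-trivial characteristic subgroup of $S_{xy}$ normal in $M_i$'' hypothesis no longer connects back to the Hauptlemma via Lemma~\ref{lem: approx}. You gesture at a gluing argument but give none. Your alternative Frattini route also fails as written: $N_{G_x}(S_{xy})^{\Gamma(x)}$ lands in $N_{\sym(5)}(\langle(3\,4\,5)\rangle)\cong\sym(2)\times\sym(3)$, which has four orbits on ordered pairs, so you have produced no reason why $N_{G_x}(Z_{xy})$ should be transitive on $\Gamma(x)$.

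The paper's argument is entirely different and avoids pushing up for this claim. It is a short failure-of-factorisation argument on $Z_x$. First, $J(S_{xy})\nleqslant Q_x$, since otherwise $J(S_{xy})=J(Q_x)$ would be characteristic in $S_{xy}$ and normal in $G_x$, contradicting the Hauptlemma. Hence there is a maximal elementary abelian $A\leqslant S_{xy}$ with $A\nleqslant Q_x$, and the standard inequality $|Z_xC_A(Z_x)|\leqslant|A|$ together with $C_A(Z_x)=A\cap Q_x$ (from Claim~\ref{claim: semireg cents}) gives $|Z_x/C_{Z_x}(A)|\leqslant|A/(A\cap Q_x)|=3$. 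Now $R_x/Q_x\in\{\alt(5),\SL(2,5)\}$, and every non-trivial irreducible $\mathrm{GF}(3)$-module for these groups has the property that an element of order~$3$ moves more than a subgroup of index~$3$; hence every $R_x/C_{R_x}(Z_x)$-composition factor of $Z_x$ is trivial. Coprime action (Lemma~\ref{lem:coprime action}) then forces $[Z_x,O^3(R_x)]=1$, while $O^3(R_x)$ is transitive on $\Gamma(x)$, contradicting Claim~\ref{claim: semireg cents}. The missing idea in your attempt is precisely this offender/FF-module step, which disposes of $(5,3)$ without any recourse to Theorem~\ref{thm: Stell} or to the approximating subgroups $T_i$.
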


Assume that $(n,p)=(5,3)$. Then $R_x/Q_x \cong \alt(5)$ or $\SL(2,5)$ and $R_x /C_{R_x}(Z_x) \cong \alt(5)$ or $\SL(2,5)$. Note that $J(Q_xQ_y) \leqslant Q_x$ means $J(Q_x) = J(Q_xQ_y)$ and it follows from the Hauptlemma that $Q_x =1$, a contradiction. Hence there is an elementary abelian subgroup of maximal order $A\leqslant Q_xQ_y$ with $A\nleqslant Q_x$. Now $C_{Z_x}(A)A$ is elementary abelian, so $|C_{Z_x}(A) A| \leqslant |A|$, so we have $C_{Z_x}(A) \leqslant A$, and so $C_{Z_x}(A) = Z_x \cap A$. Now, using \ref{claim: semireg cents}, $C_A(Z_x)= A \cap C_{Q_xQ_y}(Z_x)  \leqslant (Q_x Q_y )\cap G_x^{[1]} = Q_x$. Hence $C_A(Z_x) = A \cap Q_x$. Further, since $Z_xC_A(Z_x)$ is elementary abelian, we have
$$\frac{|Z_x| | C_A(Z_x) | }{ |Z_x \cap A|} = \frac{|Z_x| | C_A(Z_x) | }{|C_{Z_x}(A)|} = |Z_x C_A(Z_x)| \leqslant |A|$$
and so $|Z_x / C_{Z_x}(A)| \leqslant |A/C_A(Z_x)|$, which is to say that $A$ is an ``offender'' on $Z_x$. Note that $ |A/ A \cap Q_x|=3$.
% Z_x C_A(Z_x) is elt ab, so |Z_x C_A(Z_x) | \leqslant |A|. 
% |Z_x ||C_A(Z_x) | / |Z_x \cap C_A(Z_x)|
% and Z_x cap C_A(Z_x) < C_Z_x(A)
% C_A(Z_x) cap Z_x 
% C_Z_x(A) A  = A so C_Z_x(A) =C_A(Z_x) = Z_x cap A
%$$|Z_x/C_{Z_x}(A)| \leqslant |A/C_A(Z_x)| = |A/ A \cap Q_x| =3.$$ 
If $W$ is a $R_x/C_{R_x}(Z_x)$ composition factor of $Z_x$, then $|W / C_{W}(A)| \leqslant |Z_x/C_{Z_x}(A)|$ which (by considering the irreducible $\mathrm{GF}(3)$ modules for $\alt(5)$ and $\SL(2,5)$) implies that $W$ must be the trivial module. Lemma~\ref{lem:coprime action} shows that $Z_x$ is centralised by $O^3(R_x)$.  On the other hand, $O^3(R_x)$ is transitive on $\Gamma(x)$, a contradiction to \ref{claim: semireg cents}. This proves \ref{claim: p=2} and so  we may now assume that $(n,p)=(4,2)$ or $(6,2)$.

%Since $|G_{xy}^{\Gamma(x)}| =2$, it follows that $G_{xy}$ is a $2$-group. Let $S:=G_{xy} /G_x^{[1]}$.  
By Lemma~\ref{lem: approx} there is a $N_L(L_\omega)$-conjugacy class of subgroups $\{T_1, T_2\}$ such that $T_i \cong \sym(n-1 )$ and $L_\omega \leqslant T_i$ for $i=1,2$. Let $R_i$ be the full preimage of $T_i$ in $G_x$. Note that the subgroups $R_1, R_2$  are a $N_{G_x}(G_{xy})$-conjugacy class.

%Note that $C_{G_x}(Q_x)$ is a normal subgroup of $G_x$. Thus $G_{G_x}(Q_x)^{\Gamma(x)}$ is either  transitive or  intransitive and semiregular. In the former case,  $C_{G_x}(Q_x)$ centralises $Q_x \cap Q_y$, which contains $G_{xy}^{[1]}$, and so  $\langle C_{G_x}(Q_x), G_e \rangle$ normalises $G_{xy}^{[1]}$, which implies $G_{xy}^{[1]}=1$ by the Hauptlemma, a contradiction.  Hence $G_{G_x}(Q_x)$ is semiregular, and so $C_{G_{xy}}(Q_x) = C_{G_x}(Q_x) \cap G_{xy} \leqslant G_x^{[1]}$. Since $Q_x=F^*(G_x^{[1]})$ we have $C_{G_x^{[1]}}(Q_x) \leqslant Q_x$ and so $C_{G_{xy}}(Q_x) \leqslant Q_x$.

Set $R_i^\circ = \langle (Q_xQ_y)^{R_i} \rangle$ and note that since $S_{xy}=O_p(G_{xy})=Q_xQ_y$ we have that $\{R_1^\circ,R_2^\circ\}$ is a $N_{G_x}(G_{xy})$-conjugacy class. Then $R_i^\circ$ is normal in $R_i$ and since $[R_i^\circ, G_x^{[1]} ] \leqslant Q_x$, we see that $R_i^\circ / Q_x $ is a central extension of $R_i^\circ / (R_i^\circ \cap G_x^{[1]})$ by $(R_i^\circ \cap G_x^{[1]})/ Q_x$. In particular, since $(R_i^\circ \cap G_x^{[1]})/ Q_x$ is abelian, and $Q_x = O_p(G_x)$, we have that $p \nmid |(R_i^\circ \cap G_x^{[1]})/ Q_x|$. It follows that
$ R_i^\circ / Q_x \cong \sym(3)$ if $n=4$ and $R_i^\circ/Q_x \cong \alt(5)$ if $n=6$.
In all cases we have that $Q_xQ_y$ is a Sylow $p$-subgroup of $R_i^\circ$ and that $O_p(R_i^\circ) = Q_x$. Note that $C_{R_i^\circ}(Q_x) \cap G_x^{[1]} \leqslant Q_x$. Suppose (for a contradiction) that $C_{R_i^\circ}(Q_x) \nleqslant G_x^{[1]}$, then $C_{G_x}(Q_x)G_x^{[1]}/G_x^{[1]}$ is a nontrivial normal subgroup of $G_x/G_x^{[1]}$. If $n=6$, it is immediate that $C_{G_x}(Q_x)$ must be transitive on $\Gamma(x)$ and if $n=4$, then the fact that $3$ divides the order of $C_{R_i^\circ}(Q_x)$ implies that $C_{G_x}(Q_x)$ is transitive on $\Gamma(x)$. This contradicts \ref{claim: semireg cents}. Hence we have:

\begin{claim}
For $i=1,2$,
\begin{itemize}
\item[(i)] $O_p(R_i^\circ) = Q_x$,
\item[(ii)] $C_{R_i^\circ}(Q_x) \leqslant Q_x$ and $R_i^\circ / Q_x \in \{ \sym(3),  \alt(5)\}$,
\item[(iii)] $Q_x Q_y$ is a Sylow $p$-subgroup of $R_i^\circ$.
\end{itemize}
\end{claim}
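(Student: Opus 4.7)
The plan is to identify the structure of $R_i^\circ/Q_x$ by analysing its image in the local action on $\Gamma(x)$. First, $R_i^\circ$ is normal in $R_i$ by construction (it is generated by $R_i$-conjugates of $S_{xy}=Q_xQ_y$), so its image in $R_i/G_x^{[1]}\cong T_i\cong \sym(n-1)$ is a normal subgroup of $T_i$ containing the nontrivial normal $p$-subgroup $S_{xy}^{\Gamma(x)}$ of $G_{xy}^{\Gamma(x)}\cong \sym(n-2)$. A direct computation inside $\sym(n-1)$ identifies this normal closure: it is all of $\sym(3)$ when $n=4$, and $\alt(5)\leqslant \sym(5)$ when $n=6$.

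Next I would show $(R_i^\circ\cap G_x^{[1]})/Q_x$ has order prime to $p$. The relation $[R_i^\circ,G_x^{[1]}]\leqslant Q_x$, which holds because $G_x^{[1]}$ normalises every $R_i$-conjugate of $Q_xQ_y$ modulo $Q_x$, makes $(R_i^\circ\cap G_x^{[1]})/Q_x$ central in $R_i^\circ/Q_x$; since $Q_x=O_p(G_x^{[1]})$, any $p$-element would lie in $Q_x$, forcing the quotient to be a $p'$-group. This yields the identification of $R_i^\circ/Q_x$ in (ii). The Sylow assertion (iii) follows by matching $|Q_xQ_y/Q_x|$ with the order of a Sylow $p$-subgroup of $\sym(3)$ or $\alt(5)$, both of which have a unique Sylow $2$-subgroup of order $2$ or $4$ respectively. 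Then (i) is immediate: any normal $p$-subgroup of $R_i^\circ$ projects to a normal $p$-subgroup of $\sym(3)$ or $\alt(5)$, which is trivial, so $O_p(R_i^\circ)\leqslant Q_x$, and the reverse inclusion is clear.

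The main obstacle is the centraliser assertion $C_{R_i^\circ}(Q_x)\leqslant Q_x$. From \ref{claim: semireg cents} together with the Hauptlemma we already have $C_{G_x^{[1]}}(Q_x)\leqslant Q_x$, so $C_{R_i^\circ}(Q_x)\cap G_x^{[1]}\leqslant Q_x$. Suppose for contradiction that $C_{R_i^\circ}(Q_x)\nleqslant G_x^{[1]}$; then the image of $C_{G_x}(Q_x)$ in $G_x/G_x^{[1]}\cong \sym(n)$ is a nontrivial normal subgroup. For $n=6$ every nontrivial normal subgroup of $\sym(6)$ is transitive on the $30$ ordered pairs, contradicting \ref{claim: semireg cents}. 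For $n=4$ the extra work is to rule out the image landing in the Klein four normal subgroup of $\sym(4)$: the prime-to-$p$ analysis above shows that the image of $C_{R_i^\circ}(Q_x)$ in $R_i^\circ/Q_x\cong \sym(3)$ must have order divisible by $3$, so its projection to $\sym(4)$ contains a $3$-cycle and is therefore not contained in the Klein four subgroup, forcing transitivity on $\Gamma(x)$ and again contradicting \ref{claim: semireg cents}. The delicate piece is really this last $n=4$ counting step, since $\sym(4)$ is small enough to accommodate an intransitive nontrivial normal subgroup.
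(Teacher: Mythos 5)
Your argument follows the paper's proof essentially step for step: the central-extension analysis of $R_i^\circ/Q_x$ via $[R_i^\circ,G_x^{[1]}]\leqslant Q_x$ together with $Q_x=O_p(G_x^{[1]})$, the identification of the quotient as $\sym(3)$ or $\alt(5)$, the Sylow/order matching for (iii) and (i), and the centraliser contradiction in which the $n=4$ case is settled by noting that the relevant normal subgroup has order divisible by $3$ and so cannot sit inside the Klein four subgroup of $\sym(4)$. The one imprecision is your justification of the key containment $[R_i^\circ,G_x^{[1]}]\leqslant Q_x$: that $G_x^{[1]}$ \emph{normalises} each conjugate of $Q_xQ_y$ modulo $Q_x$ is weaker than the centralising statement you need, and the correct reason (which the paper takes from \cite[Lemma 2.5]{spigaffine} when treating $R_x$) is that $[Q_{y'},G_x^{[1]}]\leqslant Q_{y'}\cap G_x^{[1]}\leqslant G_{xy'}^{[1]}\leqslant Q_x$ for each neighbour $y'$ of $x$.
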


Suppose that $ C$ is a characteristic subgroup of $S_{xy}=Q_xQ_y$. Since $S_{xy}$ is characteristic in $G_{xy}$, it follows that $C$ is normal in $G_e$. If $C$ is normalised by $R_i^\circ$, then  $C$ is normalised by
$$\langle N_{G_x}(G_{xy}), R_i^\circ \rangle = \langle N_{G_x}(G_{xy}), R_1^\circ, R_2^\circ \rangle.$$
Lemma~\ref{lem: approx}(3) shows that  the image of this subgroup in $G_x/G_x^{[1]}$ is a transitive subgroup. Hence $C$ is normalised by a transitive subgroup of $G_x$ and so the Hauptlemma implies that  $C=1$. Thus:

\begin{claim} For $i=1,2$, no nontrivial characteristic subgroup of $Q_xQ_y$ is normal in $R_i^\circ$.
\end{claim}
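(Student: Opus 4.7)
The plan is to apply the Hauptlemma to an arbitrary nontrivial characteristic subgroup $C$ of $Q_xQ_y$ that is normal in $R_i^\circ$, and thereby derive $C=1$. Two observations are crucial. First, since $Q_xQ_y = S_{xy} = O_p(G_{xy})$ is characteristic in $G_{xy}$, the subgroup $C$ is in turn characteristic in $G_{xy}$, and hence normal in the edge stabiliser $G_e$. This supplies the normality hypothesis of the Hauptlemma and, moreover, shows that every element of $N_{G_x}(G_{xy})$ (acting on $G_{xy}$ by an automorphism) must stabilise $C$, so $N_{G_x}(G_{xy}) \leqslant N_{G_x}(C)$. Second, since $\{R_1^\circ, R_2^\circ\}$ is a single $N_{G_x}(G_{xy})$-conjugacy class, and the latter already normalises $C$, the hypothesis $R_i^\circ \leqslant N_{G_x}(C)$ for one index $i$ automatically transfers to both, giving
$$H := \langle N_{G_x}(G_{xy}),\ R_1^\circ,\ R_2^\circ \rangle \leqslant N_{G_x}(C).$$

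To finish, I verify that $H$ is transitive on $\Gamma(x)$, which by the Hauptlemma forces $C=1$. Passing to the quotient $G_x/G_x^{[1]}\cong L$, the image of $N_{G_x}(G_{xy})$ contains $N_L(L_\omega)$, and each $R_i^\circ$ has image inside the subgroup $T_i$ of Lemma~\ref{lem: approx} (since $R_i$ is the full preimage of $T_i$). Because $R_i^\circ \supseteq Q_xQ_y$ and $(Q_xQ_y)^{\Gamma(x)}$ is a nontrivial normal $p$-subgroup of $L_\omega \cong \sym(n-2)$, the image of $R_i^\circ$ strictly contains $L_\omega$; using the maximality of $L_\omega$ in $T_i \cong \sym(n-1)$ and parts (3)--(4) of Lemma~\ref{lem: approx}, the three subgroups $N_L(L_\omega)$, $\mathrm{image}(R_1^\circ)$ and $\mathrm{image}(R_2^\circ)$ together generate $L$. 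Hence $H/G_x^{[1]}$ is transitive on $\Gamma(x)$, and the Hauptlemma delivers $C=1$.

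The delicate point is the transitivity verification: one must pin down the image of $R_i^\circ$ in $T_i$ precisely enough (in each of the remaining cases $n=4$ and $n=6$) to apply Lemma~\ref{lem: approx}. This is a small case-by-case check but requires no new ideas beyond the structural preparation already in place; the linchpin of the whole argument is the first paragraph, which propagates characteristicity in $S_{xy}$ to invariance under the normaliser $N_{G_x}(G_{xy})$ and then leverages the conjugacy of $R_1^\circ$ and $R_2^\circ$ to feed both into the Hauptlemma simultaneously.
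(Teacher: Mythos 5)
Your argument is essentially the paper's: characteristicity of $C$ in $S_{xy}=Q_xQ_y$ (itself characteristic in $G_{xy}$) makes $C$ normal in $G_e$ and invariant under $N_{G_x}(G_{xy})$; the $N_{G_x}(G_{xy})$-conjugacy of $R_1^\circ$ and $R_2^\circ$ then puts both into $N_{G_x}(C)$; and transitivity of the resulting subgroup via Lemma~\ref{lem: approx} plus the Hauptlemma gives $C=1$. One intermediate assertion in your transitivity check is false, however: for $n=6$ the image of $R_i^\circ$ in $G_x/G_x^{[1]}$ is the normal closure of $(Q_xQ_y)^{\Gamma(x)}$ (a nontrivial normal $2$-subgroup of $L_\omega\cong\sym(4)$, hence the Klein four-group) in $T_i\cong\sym(5)$, which is $\alt(5)$ --- consistent with the paper's computation $R_i^\circ/Q_x\cong\alt(5)$. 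This does not contain $L_\omega\cong\sym(4)$ at all, so it certainly does not ``strictly contain'' it, and your appeal to the maximality of $L_\omega$ in $T_i$ applied to the image of $R_i^\circ$ alone breaks down. The conclusion nevertheless survives because $N_{G_x}(G_{xy})$ is also in your generating set: its image contains $L_\omega$, and $\langle \sym(4),\alt(5)\rangle=T_i$, whence the total image contains $\langle T_1,T_2\rangle=L$ (alternatively, $\langle O^2(T_1),O^2(T_2)\rangle\cong\alt(6)$ is already transitive on ordered pairs, as in the final assertion of Lemma~\ref{lem: approx}). With that step repaired the proof is correct and coincides with the paper's, which simply cites Lemma~\ref{lem: approx}(3) for the transitivity.
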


By Claims (2) and (3) (using the isomorphisms $\sym(3)\cong \SL(2,2)$ and $\alt(5) \cong \SL(2,4)$), for $i=1,2$ we may apply Theorem~\ref{thm: Stell} to $R_i$, by setting $M=R_i$ and $Q_xQ_y= S$. Since $C_{R_i^\circ}(Q_x) \leqslant Q_x$,  $Q_xQ_y$ cannot be elementary abelian. Hence, since $p=2$, one of the outcomes (a) or (b) holds for $R_1^\circ$ and $R_2^\circ$ (independently). Let $V_i = [Q_x, R_i^\circ]$. Note that $C_{Q_x}(R_1^\circ)$ and $C_{Q_x}(R_2^\circ)$ are conjugate under $N_{G_x}(G_{xy})$. 

\begin{claim}
For $i=1,2$ we have  $[G_x^{[2]}, O^p(R_i^\circ)] \leqslant Z(R_i^\circ)$.
\end{claim}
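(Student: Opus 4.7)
The plan is to apply Stellmacher's Theorem~\ref{thm: Stell} to give $V_i$ a natural $\SL(2,2^f)$-module structure, reduce the claim to a submodule statement in a simple module, and close the bad case via the Hauptlemma.

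First I would establish that $G_x^{[2]} \leqslant Q_x \cap Q_y$ for every $y \in \Gamma(x)$.  The group $G_x^{[2]}$ lies in $G_{xy}^{[1]}$, which is a $p$-group by Theorem~\ref{thm:tw}, and is normal in $G_x$, hence in $G_{xy}$.  For $y \in \Gamma(x)$ we have $x \in \Gamma(y)$, so $G_y^{[1]} \leqslant G_{xy}$; therefore $G_x^{[2]}$ is a normal $p$-subgroup of $G_y^{[1]}$ and lies in $O_p(G_y^{[1]}) = Q_y$, and similarly $G_x^{[2]} \leqslant Q_x$.

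Next I would apply Theorem~\ref{thm: Stell} with $M = R_i$, $p=2$, $S = Q_xQ_y$; Claims~(2) and~(3) supply the hypotheses, and case~(c) is excluded because $p=2$.  Thus we are in case~(a) or~(b); in both $V_i \leqslant Z(Q_x)$ and $V_i^\ast := V_i/(V_i \cap Z(R_i^\circ))$ carries the structure of a natural $\SL(2,2^f)$-module for $R_i^\circ/Q_x$, with $f=1$ if $n=4$ and $f=2$ if $n=6$.  Set $X := [G_x^{[2]}, O^p(R_i^\circ)]$.  From $G_x^{[2]} \leqslant Q_x$ we get $X \leqslant [Q_x, R_i^\circ] = V_i$, and from $G_x^{[2]}$ being normal in $G_x$ we get $X \leqslant G_x^{[2]}$.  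Lemma~\ref{lem:coprime action}(ii) gives $X = [X, O^p(R_i^\circ)]$, so the image $\overline X$ of $X$ in $V_i^\ast$ is an $O^p(R_i^\circ)$-submodule with $[\overline X, O^p(R_i^\circ)] = \overline X$.  Since $V_i^\ast$ is simple and non-trivial, $\overline X \in \{0, V_i^\ast\}$, and the claim is equivalent to $\overline X = 0$.

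Suppose for contradiction that $\overline X = V_i^\ast$.  Then $V_i = X \cdot (V_i \cap Z(R_i^\circ)) \leqslant G_x^{[2]} \cdot Z(R_i^\circ)$, and by $N_{G_x}(G_{xy})$-conjugation the same holds for $V_j$ with $j \neq i$.  The main technical hurdle is then to exhibit a non-trivial subgroup $K$ satisfying all three hypotheses of the Hauptlemma, namely $K \leqslant G_{xy}$, $K$ normal in $G_e$, and $N_{G_x}(K)$ transitive on $\Gamma(x)$, so that $K=1$ contradicts $V_i \neq 1$.  The naive candidate $V_1V_2$ is normalised by $\langle R_1, R_2 \rangle = G_x$ via Lemma~\ref{lem: approx}~(4) and so has transitive $G_x$-normaliser, but it need not be invariant under the edge-swap in $G_e$; the fix I would pursue is to symmetrise by adjoining the analogous objects at $y$, or to use the central piece $V_i \cap Z(R_i^\circ) \leqslant Z(S_{xy})$, which enjoys much better symmetry properties under $G_e$, thereby producing a $K \leqslant G_x^{[2]}$ meeting all three Hauptlemma hypotheses.
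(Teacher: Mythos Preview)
Your setup---applying Stellmacher, identifying $V_i^\ast$ as a simple module, and reducing to the dichotomy $\overline{X}\in\{0,V_i^\ast\}$---matches the paper's. The gap is in the closing step. You correctly flag that producing a Hauptlemma candidate $K$ is the hurdle, but your proposed candidates do not work as stated: there is no reason $R_1$ should normalise $V_2$, so the claim that $V_1V_2$ is $\langle R_1,R_2\rangle$-invariant is unjustified, and the suggested fixes (``symmetrise at $y$'', ``use the central piece'') are hand-waving rather than an argument.

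The paper takes $K=G_{xy}^{[1]}$, which is automatically normal in $G_e$. The idea you are missing is that, under the contradiction hypothesis, one can show $O^p(R_i^\circ)$ \emph{normalises} $G_{xy}^{[1]}$ for each $i$. Set $U=\langle (G_{xy}^{[1]})^{R_i^\circ}\rangle\leqslant Q_x$. Then $[U,O^p(R_i^\circ)]\leqslant V_i=[G_x^{[2]},O^p(R_i^\circ)]\bigl(V_i\cap Z(R_i^\circ)\bigr)$, so
\[
[U,O^p(R_i^\circ),O^p(R_i^\circ)]\ \leqslant\ [G_x^{[2]},O^p(R_i^\circ)]\ \leqslant\ G_x^{[2]}\ \leqslant\ G_{xy}^{[1]},
\]
and coprime action gives $[U,O^p(R_i^\circ)]=[U,O^p(R_i^\circ),O^p(R_i^\circ)]\leqslant G_{xy}^{[1]}$; in particular $[G_{xy}^{[1]},O^p(R_i^\circ)]\leqslant G_{xy}^{[1]}$. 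Hence $\langle O^p(R_1^\circ),O^p(R_2^\circ)\rangle$, transitive on $\Gamma(x)$ by Lemma~\ref{lem: approx}, normalises $G_{xy}^{[1]}$, and the Hauptlemma forces $G_{xy}^{[1]}=1$---contradicting the standing assumption.

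So rather than manufacturing $K$ from the $V_i$, push the consequence $V_i\leqslant G_x^{[2]}\,Z(R_i^\circ)$ through one more round of commutation to see that $O^p(R_i^\circ)$ already normalises the ready-made candidate $G_{xy}^{[1]}$.
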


Suppose for a contradiction that $[G_x^{[2]}, O^p(R_i^\circ)] \nleqslant Z(R_i^\circ)$ for some $i$. Since $G_x^{[2]}$ is normal in $G_x$, for $g\in G_x$ with $(R_1^\circ)^g = R_2^\circ$ we have 
$$ [G_x^{[2]}, O^p(R_1^\circ) ] \leqslant Z(R_1^\circ) \Leftrightarrow [G_x^{[2]}, O^p(R_1^\circ)]^g \leqslant Z(R_1^\circ)^g \Leftrightarrow [G_x^{[2]}, O^p(R_2^\circ)] \leqslant Z(R_2^\circ).$$
Hence, we may assume that $[G_x^{[2]}, O^p(R_i^\circ)] \nleqslant Z(R_i^\circ)$ for $i=1,2$.  Now  $[G_x^{[2]}, O^p(R_i^\circ)] \leqslant V_i$, and $V_i/(V_i \cap Z(R_i^\circ))$ is a simple module, so   $V_i = [G_x^{[2]}, O^p(R_i^\circ)] (V_i \cap Z(R_i^\circ))$. Let $U= \langle (G_{xy}^{[1]})^{R_i^\circ} \rangle$, and note $U \leqslant Q_x$. Then
$$[ U , O^p(R_i^\circ)] \leqslant [Q_x , O^p(R_i^\circ)] = V_i  =   [G_x^{[2]}, O^p(R_i^\circ) ](V_i \cap Z(R_i^\circ))$$
so 
$$[ U , O^p(R_i^\circ), O^p(R_i^\circ)] \leqslant  [[ G_x^{[2]}, O^p(R_i^\circ) ](V_i \cap Z(R_i^\circ)), O^p(R_i^\circ)] = [ G_x^{[2]}, O^p(R_i^\circ) ] \leqslant G_x^{[2]} \leqslant G_{xy}^{[1]}$$
By coprime action, $[ U , O^p(R_i^\circ), O^p(R_i^\circ)] =[ U, O^p(R_i^\circ)] $ this means $[G_{xy}^{[1]}, O^p(R_i^\circ)] \leqslant G_{xy}^{[1]}$ and so as above, $O^p(R_i^\circ)$ normalises $G_{xy}^{[1]}$ for $i=1,2$. Since  $\langle O^p(R_1^\circ), O^p(R_2^\circ)  \rangle$ is transitive on $\Gamma(x)$ by Lemma~\ref{lem: approx}, the Hauptlemma implies  $G_{xy}^{[1]}=1$, a contradiction.

\begin{claim}
\label{claim: cop action}
The group $G_{xy}^{[2]}$ is trivial.
\end{claim}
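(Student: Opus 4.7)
The plan is to apply the Hauptlemma with $K = G_{xy}^{[2]}$. Since $G_{xy}^{[2]} = G_x^{[2]} \cap G_y^{[2]}$ and elements of $G_e$ either fix both of $x,y$ or swap them, $K$ is normal in $G_e$. It remains to produce a subgroup of $N_{G_x}(K)$ that is transitive on $\Gamma(x)$.

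The key step will be to strengthen Claim 5 from $[G_x^{[2]}, O^p(R_i^\circ)] \leqslant Z(R_i^\circ)$ to the stronger statement that $O^p(R_i^\circ)$ actually centralises $G_x^{[2]}$, for $i=1,2$. To this end, I would first observe that $G_x^{[2]} \leqslant G_{xy}^{[1]}$, so $G_x^{[2]}$ is a $p$-group by Theorem~\ref{thm:tw}. Then Lemma~\ref{lem:coprime action}(ii), applied to the conjugation action of $R_i^\circ$ on $G_x^{[2]}$, gives $[G_x^{[2]}, O^p(R_i^\circ)] = [G_x^{[2]}, O^p(R_i^\circ), O^p(R_i^\circ)]$. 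Substituting the inclusion from Claim 5 into the inner commutator, the right-hand side lies in $[Z(R_i^\circ), O^p(R_i^\circ)] = 1$, so $[G_x^{[2]}, O^p(R_i^\circ)] = 1$ as desired.

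Since $O^p(R_i^\circ)$ centralises $G_x^{[2]}$, it normalises every subgroup of $G_x^{[2]}$, in particular $K = G_{xy}^{[2]}$. By Lemma~\ref{lem: approx}, the group $\langle O^p(R_1^\circ), O^p(R_2^\circ) \rangle$ projects onto a transitive subgroup of $G_x^{\Gamma(x)}$, so $N_{G_x}(K)$ is transitive on $\Gamma(x)$. The three hypotheses of the Hauptlemma are now in place, and it delivers $G_{xy}^{[2]} = 1$.

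The only delicate point I expect is checking that the coprime-action machinery in Lemma~\ref{lem:coprime action}(ii) is genuinely applicable, which reduces to the observation $G_x^{[2]} \leqslant G_{xy}^{[1]}$; once this $p$-group status is recorded, the upgrade of Claim 5 and the subsequent Hauptlemma application are essentially automatic.
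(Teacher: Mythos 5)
Your proof is correct and is essentially identical to the paper's own argument: both upgrade the previous claim via Lemma~\ref{lem:coprime action}(ii) to show $O^p(R_i^\circ)$ centralises $G_x^{[2]}$, then invoke Lemma~\ref{lem: approx} and the Hauptlemma. Your explicit remark that $G_x^{[2]} \leqslant G_{xy}^{[1]}$ is a $p$-group (so coprime action applies) is a point the paper leaves implicit, but otherwise the two arguments coincide.
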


Coprime action  gives $[G_x^{[2]}, O^p(R_i^\circ)] = [G_x^{[2]},O^p(R_i^\circ), O^p(R_i^\circ)]$. By (3), for $i=1,2$ we have $[G_x^{[2]}, O^p(R_i^\circ),O^p(R_i^\circ)]  \leqslant [Z(R_i^\circ),O^p(R_i^\circ)]=1$. Since the statement holds for $i=1,2$, we have that $G_{xy}^{[2]}$ is normalised by $\langle O^p(R_1^\circ), O^p(R_2^\circ) \rangle$, and as above, the Hauptlemma implies $G_{xy}^{[2]}=1$.
%
%
%\begin{claim}
%$n=4$.
%\end{claim}
%
%We continue to analyse the case $n=6$. Note that 
%
This completes the proof of Theorem~\ref{thm: intro}.

 \section{examples}

In this section we give a construction that allows us to produce  examples of locally semiprimitive graphs from locally quasiprimitive graphs. We use the theory of amalgams. An amalgam is a triple of groups $(A,B,C)$ where $C$ is a specified subgroup of $A$ and of $B$. From a pair $(\Gamma,G)$ with $G \leqslant \aut(\Gamma)$, we get an amalgam $(G_x,G_e,G_{xy})$ for each edge $\{x,y\}$. If $G$ is vertex-transitive and $G_x^{\Gamma(x)}$ is transitive, then this amalgam is an invariant of the pair $(\Gamma,G)$ (so we may speak of ``the'' vertex-edge stabiliser amalgam of the pair). Conversely, amalgams give such pairs.  An amalgam $(A,B,C)$ is called faithful if the only subgroup of $C$ that is normal in both $A$ and $B$ is the trivial subgroup. If $(A,B,C)$ is a faithful amalgam, $|B:C|=2$  and $L$ is the permutation group induced by $A$ on the set of cosets of $B$ in $A$, then there exists a locally $L$ pair $(\Gamma,G)$ and the vertex-edge stabiliser amalgam of $(\Gamma,G)$ is $(A,B,C)$. For full details we refer the reader to \cite[Lemma 2.1]{MSV}.
 
Suppose that $L \leqslant \sym(\Omega)$ is a semiprimitive group with an intransitive semiregular normal subgroup $S$ and let  $\Delta$ be the set of $S$-orbits. Since $L$ is semiprimitive  the kernel of the action of $L$ on $\Delta$ is $S$ (see \cite[Lemma 3.1]{GiudiciMorgan}) and so $L^\Delta=L/S=:M$. Note that for $\omega \in \Omega$ and $\delta \in \Delta$, $L_\omega \cong M_\delta$, since $S$ is semiregular. Suppose now that $(\Sigma,H)$  is a locally $M$ pair and let $(H_x, H_e,H_{xy})$ be the vertex-edge stabiliser amalgam of $(\Sigma,H)$. We  construct a faithful amalgam $(G_x,G_e,G_{xy})$ such that the permutation group induced by $G_x$ on the set of cosets of $G_{xy}$ is $L$ and $|G_e:G_{xy}|=2$ and it will follow that there exists a locally $L$ pair $(\Gamma,G)$ with vertex-edge stabiliser amalgam $(G_x,G_e,G_{xy})$.

Let $A = L \times  H_x$. Then $X = S \times H_x^{[1]}$ is a normal subgroup of $A$. Further, 
$$A/X \cong L/S \times H_x/H_x^{[1]} = \{(Sa,H_x^{[1]}b) : a \in L ,  b\in H_x\}.$$
 Since $L/S = L^\Delta = M$ and $H_x/H_x^{[1]} \cong M$, there is a permutation isomorphism   $\phi :  H_x/H_x^{[1]} \rightarrow  L/S$. We define  $G_x$ to be a subgroup of $A$ containing $X$ such that 
 $$G_x/ X = \{ ( \phi(b), b) : b \in H_x/H_x^{[1]}\}.$$
 For $G_{xy}$ we look inside the subgroup $L_\omega \times H_{xy}$ of $A$. Now $H_x^{[1]}$ is normal in this group, and since $H_{xy}^{\Gamma(x)} \cong M_\delta \cong L_\omega$ we see that $(L_\omega \times H_{xy})/H_x^{[1]} \cong M_\delta \times M_\delta$. We  define  $G_{xy}$ to be the subgroup of $L_\omega \times H_{xy}$ containing $H_x^{[1]}$ such that 
 $$G_{xy} / H_x^{[1]} = \{ ( \phi( b) ,   b) : b\in H_{xy}/H_x^{[1]} \}.$$
 Since $\phi$ is an isomorphism, in fact $G_{xy} \cong H_{xy}$. Notice that $G_{xy} \leqslant G_x$ since   $G_{xy} \leqslant S G_{xy}$ and the elements of $S G_{xy}$  project to elements in $A/X$ that lie in $G_x/ X$. Set $G_e = H_e$. Since $G_{xy} \cong H_{xy}$, $G_{xy}$ is isomorphic to a subgroup of $G_e$ and $|G_e:G_{xy}|=2$. Thus 
 $$(G_x, G_e, G_{xy})$$
 is an amalgam. We compute $G_x^{[1]}:=core_{G_x}(G_{xy})$. Working modulo $S$, we see that the largest normal subgroup of $G_x$ contained in $G_{xy}$ must be contained in $S H_x^{[1]}$. Since $G_{xy} \cap S = 1$ and $H_x^{[1]} \leqslant G_{xy}$, we have that 
 $$H_x^{[1]} \leqslant core_{G_x}(G_{xy}) \leqslant  G_{xy} \cap (H_x^{[1]} S) = H_x^{[1]} (G_{xy} \cap S) = H_x^{[1]}.$$
Thus $G_x^{[1]} = H_x^{[1]}$ and so $G_{xy}^{[1]} = core_{G_e}(G_x^{[1]}) = core_{H_e}(H_x^{[1]}) = H_{xy}^{[1]}$. For $i\geqslant 2$ we define  inductively $G_x^{[i]} = core_{G_x}(G_{xy}^{[i-1]})$ and $G_{xy}^{[i]} = core_{G_e}(G_x^{[i]})$. Then, repeating these arguments, we have, for $i\geqslant 1$, $G_x^{[i]}=H_x^{[i]}$ and $G_{xy}^{[i]}=H_{xy}^{[i]}$. Since $(H_x,H_e,H_{xy})$ is a faithful amalgam, it follows that $(G_x,G_e,G_{xy})$ is a faithful amalgam. Furthermore, since the core in $G_x$ of $G_{xy}$ is $G_x^{[1]}=H_x^{[1]}$ and from   $A/H_x \cong L$ and $G_x \cap H_x = H_x^{[1]}$, it follows that $G_x/G_x^{[1]}$, the permutation group induced by the action of $G_x$ on the set of cosets of $G_{xy}$ is permutationally isomorphic to the action of $L$ on the cosets of $L_\omega$.  Thus there exists a locally $L$ pair $(\Gamma,G)$.
 
 Finally, we  apply the construction above to the group $L=S_4$ with $L_\omega = \langle (1,2) \rangle$ and $S = V_4$. Then $L/S \cong S_3$. This allows us to take any faithful amalgam $(H_x,H_e,H_{xy})$ with $H_x^{\Gamma(x)}  \cong S_3$ and produce an amalgam $(G_x,G_e,G_{xy})$ with local action $L$. The amalgams of locally $S_3$ pairs $(\Gamma,G)$ are classified by Djokovic-Miller \cite{DjokMiller}, and since there exist amalgams with $H_x^{[2]} \neq 1$ and $H_x^{[3]} = 1$, this shows that Theorem~\ref{thm: intro} is best possible for $n=4$.
 
 \begin{remark}
 The construction  above shows that  examples of locally semiprimitive graphs can be constructed from locally quasiprimitive examples. A question of relevance for the Poto\v{c}nik-Spiga-Verret Conjecture is whether all locally semiprimitive graphs arise in this way. In the construction, the intransitive semiregular normal subgroup $S$ is forced into the local action on the graph and also appears as a normal subgroup in the vertex-stabiliser. Such nice behaviour is perhaps too much to hope for.
\end{remark}

\end{document}